\newfont{\msam}{msam10}
\newtheorem{theorem}[]{Theorem}
\newtheorem{proposition}[]{Proposition}
\newtheorem{corollary}[]{Corollary}
\newtheorem{lemma}[]{Lemma}
\theoremstyle{definition}
\newtheorem{conjecture}[]{Conjecture}
\let\nc\newcommand
\nc{\la}{\label}
\def\bthm{\begin{theorem}}
\def\ethm{\end{theorem}}
\def\blemma{\begin{lemma}}
\def\elemma{\end{lemma}}
\def\bproof{\begin{proof}}
\def\eproof{\end{proof}}
\def\bprop{\begin{proposition}}
\def\eprop{\end{proposition}}
\def\bcor{\begin{corollary}}
\def\ecor{\end{corollary}}
\def\Gr{\mbox{\rm{Gr}}^{\mbox{\scriptsize{\rm{ad}}}}}
\def\Grad{\mbox{\tt Grad}}
\def\Z{\mathbb{Z}}
\def\O{\mathcal{O}}
\def\A{\mathbb{A}}
\def\K{\mathbb{K}}
\def\L{\mathscr{L}}
\def\GG{\mathfrak{G}}
\def\AG{\mathfrak{A}}
\def\BG{\mathfrak{B}}
\def\UG{\mathfrak{U}}
\def\D{\mathcal{D}}
\def\m{\mathfrak{m}}
\def\c{\mathbb{C}}
\def\CC{\mathcal{C}}
\def\PP{{\mathbb P}}
\nc{\Hom}{{\rm{Hom}}}
\nc{\Ext}{{\rm{Ext}}}
\nc{\HOM}{\underline{\rm{Hom}}}
\nc{\EXT}{\underline{\rm{Ext}}}
\nc{\TOR}{\underline{\rm{Tor}}}
\nc{\End}{{\rm{End}}}
\nc{\GL}{{\rm{GL}}}
\nc{\PGL}{{\rm{PGL}}}
\nc{\SL}{{\rm{SL}}}
\nc{\SB}{{\rm{SB}}}
\nc{\sll}{{\mathfrak{sl}}}
\nc{\Rep}{{\rm{Rep}}}
\nc{\ad}{{\rm{ad}}}
\nc{\dlim}{\varinjlim}
\newcommand{\Spec}{{\rm{Spec}}}
\newcommand{\Pic}{{\rm{Pic}}}
\newcommand{\Aut}{{\rm{Aut}}}
\newcommand{\into}{\,\,\hookrightarrow\,\,}
\begin{document}
\title[]{Trees, Amalgams and Calogero-Moser Spaces}
%
%
\author{Yuri Berest}
\address{Department of Mathematics, Cornell University, Ithaca, NY 14853-4201, USA}
\email{berest@math.cornell.edu}
%
%
\author{Alimjon Eshmatov}
\address{Department of Mathematics, University of Arizona, Tucson, AZ 85721-0089 USA}
\email{alimjon@math.arizona.edu}
\author{Farkhod Eshmatov}
\address{Department of Mathematics, University of Michigan, Ann Arbor, MI 48109-1043, USA}
\email{eshmatov@umich.edu}
%

\begin{abstract}
We describe the structure of the automorphism groups of algebras Morita equivalent to the first Weyl algebra
$ A_1 $. In particular, we give a geometric presentation for these groups in terms of amalgamated products,
using the Bass-Serre theory of groups acting on graphs. A key r\^ole in our approach is played by a
transitive action of the automorphism group of the free algebra $ \c \langle x, y\rangle $ on the
Calogero-Moser varieties $ \CC_n $ defined in \cite{BW}. Our results generalize well-known theorems of
Dixmier and Makar-Limanov on automorphisms of $ A_1 $, answering an old question of Stafford
(see \cite{St}). Finally, we propose a natural extension of the Dixmier Conjecture for $ A_1 $ to the class of
Morita equivalent algebras.
\end{abstract}
\maketitle
%
%
 Let $\, A_1 := \c \langle x, \, y \rangle/(xy-yx-1) \,$
be the first Weyl algebra over $\c$ with canonical generators $x$ and $y$. In his classic paper \cite{D},
Dixmier described the group $\, \Aut \, A_1 \,$ of automorphisms of $ A_1 $: specifically, he proved that
$\, \Aut \, A_1 \,$ is generated by the following transformations
\begin{equation}
\label{Phs}
\Phi_p\,:\,(x,\,y) \mapsto (x,\,y + p(x))\ ,\qquad \Psi_q\,:\, (x,\,y) \mapsto (x + q(y),\,y)\ ,
\end{equation}
where $\,p(x) \in \c[x]\,$ and $\,q(y) \in \c[y]\,$. Using this result of Dixmier, Makar-Limanov (see \cite{ML1, ML2})
showed that $\, \Aut \, A_1 \,$ is isomorphic to the group $\, G_0 \subset \Aut \,\c \langle x, \, y \rangle \,$
of `symplectic' (i.e. preserving $\, \omega = xy-yx$) automorphisms of the free algebra
$\,\c \langle x, \, y \rangle \,$: the corresponding isomorphism
\begin{equation}
\label{isML}
G_0 \stackrel{\sim}{\to} \Aut \,A_1
\end{equation}
is induced by the canonical projection $\, \c \langle x, \, y \rangle \to A_1\,$. On the other hand, the results of
\cite{ML1} (see, e.g., \cite{C}) also imply that $ G_0 $ is given by the amalgamated free product
\begin{equation}
\label{Aut}
G_0 = A *_U B\ ,
\end{equation}
where $\, A \, $ is the subgroup of symplectic affine transformations
\begin{equation}
\label{A}
(x,\,y) \mapsto (ax+by+e,\,cx+dy+f)\ , \quad a,\,b, \ldots,
f \in \c\ ,\quad  ad-bc=1\ ,
\end{equation}
$\, B \, $ is the subgroup of triangular (Jonqui\`eres) transformations
\begin{equation}
\label{B}
(x,\,y) \mapsto (ax + q(y),\,a^{-1}y+h)\ , \quad a \in \c^*,\
h \in \c\ ,\quad  q(y) \in \c[y]\ ,
\end{equation}
and $\, U\,$ is the intersection of $ A $ and $ B $ in $ G_0 $:
\begin{equation}
\label{U}
(x,\,y) \mapsto (ax+by+e,\,a^{-1}y+h)\ , \quad a \in \c^*,\
b,\,e,\, h \in \c\ .
\end{equation}
Combining \eqref{isML} and \eqref{Aut}, we thus get decomposition $\,\Aut\,A_1 \cong A *_U B \,$,
which completely describes the structure of $\,\Aut \,A_1\,$ as a discrete group (cf. \cite{A}).

The aim of the present paper is to generalize the above results to the case when
$ A_1 $ is replaced by a noncommutative domain $D$, Morita equivalent to
$A_1$ as a $\c$-algebra. This question was originally posed by Stafford in \cite{St}
(see {\it loc. cit.}, p. 636). To explain why it is natural, we recall that the algebras $\,D\,$
are classified, up to isomorphism, by a single integer $\, n \ge 0 \,$; the corresponding isomorphism classes
are represented by the endomorphism rings $\,D_n := \End_{A_1} M_n \,$ of certain distinguished right ideals
of $ A_1 $ and can be realized geometrically as algebras of global differential operators on rational singular
curves (see \cite{K, BW1} and \cite{BW4} for a detailed exposition). Thus
the Dixmier group $\,\Aut \,A_1 = \Aut \,D_0 \,$ appears naturally as
the first member in the family $\,\{\Aut \,D_n\, :\, n \ge 0 \}\,$. Our aim is to describe the `higher' groups
in this family: in particular, to give a presentation of $\,\Aut \,D_n\,$ for arbitrary $ n \ge 0 $ in terms
of amalgamated products.

The groups $ \Aut \,D_n $ for $ n \ge 1 $ can be naturally identified with subgroups of $\Aut \,D_0$.
To be precise, let $ \Pic\, D $ denote the (noncommutative) Picard group of a $\c$-algebra $D$. By definition,
$ \Pic\, D $ is the group of $\c$-linear Morita equivalences of the category of $ D$-modules; its elements can be
represented by the isomorphism classes of invertible $D$-bimodules $\,[P]\,$ (see, e.g., \cite{B}). There is a
natural group homomorphism $\,\omega_D:\, \Aut\,D \to \Pic\,D \,$, taking $\, \sigma \in \Aut\,D \,$ to the class of
the bimodule $ [{}_1 D_{\sigma}] $, and if $\, D' \,$ is a ring Morita equivalent to $ D $, with
progenerator $ M $, then there is a group isomorphism $\, \alpha_M:\, \Pic \,D' \stackrel{\sim}{\to} \Pic \,D\,$
given by $\,[P] \mapsto [M^* \otimes_D P \otimes _D M]\,$. Thus, in our situation, for each $ n \ge 0 $
we have the following diagram
\begin{equation}
\la{D1}
\begin{diagram}[small, tight]
\Aut  \,D_n    & \rTo^{\ \omega_{D_n}\ }  & \Pic \,D_n \\
\dDotsto^{i_n} &                      & \dTo_{\alpha_{M_n}} \\
\Aut \,D_0    & \rTo^{\ \omega_{D_0}\ }  & \Pic \,D_0 \\
\end{diagram}
\end{equation}
where the vertical map $ \alpha_{M_n} $ is an isomorphism and the two horizontal maps are injective.
Moreover, since $ D_0 = A_1 $, a theorem of Stafford (see \cite{St}) implies that $ \omega_{D_0} $ is actually
an isomorphism. Inverting this isomorphism, we define the embedding
$\, i_n:\,\Aut \,D_n \into \Aut \,D_0 \,$, which makes \eqref{D1} a commutative diagram.

Recall that we defined $ G_0 $ to be the automorphism group of the free algebra $\,\c \langle x, \, y \rangle\,$
preserving $ [x,\,y] $. Now, for $\, n > 0 \,$, we introduce the groups $ G_n $ geometrically,
in terms of a natural action of $ G_0 $ on the {\it Calogero-Moser spaces}\, (see \cite{W})
\begin{equation*}
\label{e1}
{\mathcal C}_n := \{\,(X,\,Y) \in \mbox{\tt Mat}_n(\c) \times
\mbox{\tt Mat}_n(\c)\ :\ \mbox{\tt rk}([X,\,Y] + I_n) = 1 \,\}/\,\mbox{\tt PGL}_n(\c)\ ,
\end{equation*}
where $ \mbox{\tt PGL}_n(\c) $ operates on matrices $ (X,\,Y) $ by simultaneous conjugation.
The action of $ G_0 $ on $ \CC_n $ is given by
\begin{equation}
\label{at}
(X,\,Y) \mapsto (\sigma^{-1}(X),\, \sigma^{-1}(Y))\ , \quad \sigma \in G_0\ ,
\end{equation}
where $ \sigma^{-1}(X) $ and $ \sigma^{-1}(Y) $ are the noncommutative polynomials
$\,\sigma^{-1}(x) \in \c \langle x, \, y \rangle \,$ and $\,\sigma^{-1}(y) \in \c \langle x, \, y \rangle \,$
evaluated at $(X,Y)$. It is known that $ {\mathcal C}_n $ is a smooth affine algebraic variety
of dimension $2n$, equipped with a natural symplectic structure, and it is easy to check that
$G_0$ preserves that structure. Now, a theorem of Wilson and the first author (see \cite{BW}) implies that
\eqref{at} is a transitive action for all $ n \ge 0 $.
We define the groups $ G_n $ to be the stabilizers of points of $ \CC_n $
under this action: precisely, for each $ n \ge 0 $, we fix a basepoint $ (X_0,\,Y_0) \in
{\mathcal C}_n $, with
\begin{equation*}
\label{base}
X_0 = \sum_{k=1}^{n-1} E_{k+1, k}
\quad , \qquad
Y_0 = \sum_{k=1}^{n-1} \,(k-n)\, E_{k, k+1}\ ,
\end{equation*}
where $ E_{i,j} $ stands for the elementary matrix with $(i,j)$-entry $1$, and let
\begin{equation*}
\label{gn}
G_n := \mbox{\tt Stab}_{G_0}(X_0, Y_0)\ ,\quad n \ge 0 \ .
\end{equation*}

The following result can be viewed as a generalization of the above-mentioned theorem
of Makar-Limanov; in a slightly different form, it has already appeared in
\cite{BW4} (cf. {\it loc. cit.}, p.~120; see also \cite{W2}).
\begin{theorem}
\label{T1}
There is a natural isomorphism of groups $\,G_n \stackrel{\sim}{\to} \Aut \, D_n\,$.
\end{theorem}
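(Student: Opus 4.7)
The plan is to build the isomorphism explicitly by interpreting points of $\CC_n$ as isomorphism classes of right $A_1$-ideals and then matching this geometric description with the Morita-theoretic framework of diagram \eqref{D1}. I first appeal to the Calogero-Moser correspondence of \cite{BW, BW1}, which realizes $\CC_n$ as the set of isomorphism classes of rank-one torsion-free right $A_1$-modules of invariant $n$, with the basepoint $(X_0,Y_0)$ corresponding to $[M_n]$. The crucial equivariance statement --- essentially established in \cite{BW4}, cf.\ \cite{W2} --- is that under this bijection the action \eqref{at} of $G_0 \cong \Aut A_1$ on $\CC_n$ coincides with the natural twisting action $\sigma \cdot [M] := [\sigma(M)]$ on ideal classes; consequently
\[
G_n \;=\; \{\,\sigma \in \Aut A_1\ :\ \sigma(M_n) \cong M_n \text{ as right }A_1\text{-modules}\,\}.
\]

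I then construct a map $\Psi: G_n \to \Aut D_n$ directly. Since $M_n$ and $\sigma(M_n)$ are rank-one fractional right ideals of $A_1$, any $A_1$-linear isomorphism $M_n \stackrel{\sim}{\to} \sigma(M_n)$ is left multiplication by a uniquely determined $q \in \Frac A_1$. Using the standard embedding $D_n = \End_{A_1} M_n \hookrightarrow \Frac A_1$ as $\{f : fM_n \subseteq M_n\}$, define $\Psi(\sigma)(d) := q^{-1}\sigma(d)\,q$. A direct check shows $\Psi(\sigma) \in \Aut D_n$, and the ambiguity $q \mapsto qu$ with $u \in D_n^{\times}$ alters $\Psi(\sigma)$ only by inner conjugation; since $D_n^{\times} = \c^{\times}$ (by Morita transport from $A_1^{\times} = \c^{\times}$, or by a Bernstein filtration argument), this ambiguity is trivial and $\Psi$ is a well-defined group homomorphism. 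Compatibility with diagram \eqref{D1} is a direct calculation: substituting the formula for $\Psi(\sigma)$ into $\alpha_{M_n}(\omega_{D_n}(\Psi(\sigma))) = [\,M_n^* \otimes_{D_n} {}_1(D_n)_{\Psi(\sigma)} \otimes_{D_n} M_n\,]$ and using $M_n \otimes_{A_1} M_n^* \cong D_n$ produces $[\,{}_1(A_1)_\sigma\,] = \omega_{D_0}(\sigma)$, so $i_n \circ \Psi = \id_{G_n}$. Conversely, any $\tau \in \Aut D_n$ gives the bimodule identity ${}_1(A_1)_{i_n(\tau)} \cong M_n^* \otimes_{D_n} {}_1(D_n)_\tau \otimes_{D_n} M_n$; tensoring on the left by $M_n$ over $A_1$ shows $(M_n)_{i_n(\tau)} \cong M_n$ as right $A_1$-modules, so $i_n(\tau) \in G_n$ and the two maps are mutually inverse.

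The principal obstacle is the equivariance assertion in the first step: the action \eqref{at} is defined in concrete matrix coordinates via noncommutative polynomial evaluation, whereas the twisting action on ideal classes is purely algebraic, so matching them requires chasing $\Aut A_1$-equivariance through the adelic Grassmannian / Cannings-Holland construction underlying the Calogero-Moser correspondence of \cite{BW}. This bookkeeping is nontrivial but was essentially carried out in \cite{BW4}; I would invoke that equivariance rather than reproduce it, after which the remaining steps reduce to a clean exercise in Morita theory and Picard-group chasing.
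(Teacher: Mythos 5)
Your proposal is correct and follows essentially the same route as the paper: both identify $G_n$ with the stabilizer of the ideal class $[M_n]$ via the $G_0$-equivariant Calogero--Moser correspondence of \cite{BW} and then match this stabilizer with $\Aut D_n$ through the Picard-group diagram \eqref{D1} and Stafford's theorem. You simply supply the Morita-theoretic details (the explicit conjugation map $\Psi$ and the check that the image of $i_n$ equals $G_n$) that the paper leaves as a one-line claim.
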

\noindent
Specifically, we have group homomorphisms
\begin{equation*}
\label{ison}
G_n \into G_0 \stackrel{\sim}{\to} \Aut \,A_1 \stackrel{i_n}{\hookleftarrow} \Aut \,\D_n\ ,
\end{equation*}
where the first map is the canonical inclusion, the second is the Makar-Limanov isomorphism
\eqref{isML} and $ i_n $ is the embedding defined by \eqref{D1}. We claim that the image
of $ i_n $ coincides with the image of $ G_n $, which gives the required isomorphism.

Theorem~\ref{T1} is a simple consequence of the main results of \cite{BW}: in fact, it is shown in \cite{BW}
that there is a natural $G_0$-equivariant bijection (called the Calogero-Moser correspondence)
between $\, \bigsqcup_{n \ge 0} {\mathcal C}_n \,$ and the space of isomorphism classes of right
ideals of $ A_1 $. Under this bijection, the points $ (X_0, Y_0) \in {\mathcal C}_n $
correspond precisely to the classes of the ideals $ M_n $.

We will use Theorem~\ref{T1} to give a geometric presentation for the groups $ \Aut \,D_n $.
To this end, we associate to each space $ \CC_n $ a graph $ \Gamma_n $ consisting of orbits of
certain subgroups of $ G_0 $ and identify $ G_n $ with the {\it fundamental group}
$\, \pi_1({\mathbf \Gamma}_n, \ast)\, $ of a graph of groups $ {\mathbf \Gamma}_n $ defined
by the stabilizers of points of those orbits in $ \Gamma_n $. The Bass-Serre theory of groups
acting on graphs \cite{Se} will give
then an explicit formula for $\, \pi_1({\mathbf \Gamma}_n, \ast)\, $ in terms of generalized
amalgamated products (see \eqref{fgr2} below).

To define the graph $ \Gamma_n $ we take the subgroups $A$, $B$ and $U$ of $G_0$
defined by the transformations \eqref{A}, \eqref{B} and \eqref{U}. Restricting the action of $ G_0 $ on $ \CC_n $
to these subgroups, we let $ \Gamma_n $ be the oriented bipartite graph, with vertex and edge sets
\begin{equation}
\label{gamman}
\mbox{\tt Vert}(\Gamma_{n}) := (A \backslash \mathcal{C}_n)\,\bigsqcup\, (B \backslash \mathcal{C}_n) \ ,\quad
\mbox{\tt Edge}(\Gamma_{n}) := U \backslash \mathcal{C}_n \ ,
\end{equation}
and the incidence maps $\,\mbox{\tt Edge}(\Gamma_{n}) \to \mbox{\tt Vert}(\Gamma_{n})\,$ given by the canonical
projections $\, i: U \backslash \mathcal{C}_n \to A \backslash \mathcal{C}_n \,$
and $\,\tau: U \backslash \mathcal{C}_n \to B \backslash \mathcal{C}_n  \,$. Since the elements of
$A$ and $B$ generate $ G_0 $ and $ G_0 $ acts transitively on each $ \CC_n $, the graph $ \Gamma_n $ is connected.

Now, on each orbit in $ A \backslash \mathcal{C}_n $ and $\,B \backslash \mathcal{C}_n\,$
we choose a basepoint and elements $\, \sigma_A \in G_0 \,$ and $\,\sigma_B \in G_0\, $ moving these basepoints
to the basepoint $ (X_0,\,Y_0) $ of $ {\mathcal C}_n $. Next, on each $U$-orbit $\,\O_U \in
U \backslash \mathcal{C}_n \,$ we also choose a basepoint and an element $\,\sigma_U \in G_0 \,$ moving this
basepoint to $ (X_0,\,Y_0) $ and such that $\,	\sigma_U \in \sigma_A A \,\cap \,\sigma_B B\,$,
where $ \sigma_A $ and $ \sigma_B $ correspond to the (unique) $A$- and $B$-orbits containing $ \O_U $.
Using a standard construction in the Bass-Serre theory (see \cite{Se}, Sect.~5.4), we then assign to the
vertices and edges of $ \Gamma_n $ the stabilizers $\,A_\sigma = G_n \cap \sigma A \sigma^{-1} \,$,
$\,B_\sigma = G_n \cap \sigma B \sigma^{-1} \,$, $\,U_\sigma = G_n \cap \sigma U \sigma^{-1} \,$  of the corresponding
elements $ \sigma $ in the graph of right cosets of $ G_0 $ under the action of $ G_n $.
These data together with natural group homomorphisms $\, a_\sigma: U_\sigma \hookrightarrow A_\sigma  \,$
and $\, b_\sigma:\,U_\sigma \hookrightarrow B_\sigma \,$ define a graph of groups $ {\mathbf \Gamma}_n $ over
$ \Gamma_n $, and its fundamental group $\, \pi_1({\mathbf \Gamma}_n,\,T)\,$
relative to a maximal tree $\,T \subseteq \Gamma_n \,$ has canonical presentation (see \cite{Se}, Sect.~5.1):
\begin{equation}
\label{fgr2}
\pi_1({\mathbf \Gamma}_n,\,T) = \frac{A_\sigma \ast_{\,U_\sigma} B_\sigma \ast \, \ldots\, \ast \langle\, \mbox{\tt Edge}(\Gamma_n \setminus T)\,\rangle}{(\,e^{-1} a_\sigma(g)\, e = b_{\sigma}(g)\, :\, \forall\,e \in
\mbox{\tt Edge}(\Gamma_n \setminus T),\, \forall\, g \in U_\sigma\,)}\ .
\end{equation}
In \eqref{fgr2}, the amalgams $\, A_\sigma \ast_{\,U_\sigma} B_\sigma \ast \, \ldots \,$ are taken along the stabilizers
of edges of the tree $ T $, while $\, \langle\, \mbox{\tt Edge}(\Gamma_n \setminus T)\,\rangle\,$ denotes
the free group based on the set of edges of $ \Gamma_n $ in the complement of $T$.

Our main observation is the following.
\begin{theorem}
\label{T2}
For each $ n \ge 0 $, the group $ G_n $ is isomorphic to $\, \pi_1({\mathbf \Gamma}_n,\,T)\,$. In particular,
$ G_n $ has an explicit presentation of the form \eqref{fgr2}.
\end{theorem}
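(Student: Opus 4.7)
The strategy is to apply the Bass-Serre structure theorem to the action of $G_n$ on the Bass-Serre tree of the amalgam $G_0 = A \ast_U B$. Starting from the decomposition \eqref{Aut}, this amalgam canonically determines a tree $T_0$ (see \cite{Se}, Ch.~I, Sect.~4.1) whose vertex set is $G_0/A \sqcup G_0/B$ and whose edge set is $G_0/U$, with a coset $gU$ incident to $gA$ and $gB$. The group $G_0$ acts on $T_0$ without inversion by left translation, the stabilizers are the expected conjugates of $A$, $B$, $U$, and the fundamental domain is a single edge.

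I would then restrict this action to $G_n \subseteq G_0$. Since $G_0$ acts on $T_0$ without inversion, so does $G_n$, and one obtains a well-defined quotient graph of groups over $G_n \backslash T_0$. The main theorem of Bass-Serre theory (\cite{Se}, Ch.~I, Sect.~5.4, Thm.~13) then provides a canonical isomorphism between $G_n$ and the fundamental group of this quotient graph of groups, relative to any maximal subtree. The content of Theorem~\ref{T2} is therefore to identify this datum with ${\mathbf \Gamma}_n$. For the underlying graph, the key input is Theorem~\ref{T1} together with the transitivity of the $G_0$-action on $\CC_n$ (from \cite{BW}), which yield a $G_0$-equivariant bijection $\CC_n \leftrightarrow G_0/G_n$. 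Under this bijection, for each $H \in \{A,\,B,\,U\}$, the orbit set $H \backslash \CC_n$ is canonically in bijection with the double cosets $G_n \backslash G_0 / H$, which are exactly the $G_n$-orbits on the corresponding piece of $T_0$; moreover the incidence map $U \backslash \CC_n \to A \backslash \CC_n$ is induced by $U \subseteq A$, and so matches the incidence in $G_n \backslash T_0$. Hence the underlying graphs coincide with $\Gamma_n$, and the $G_n$-stabilizers of the lifts $\sigma_A,\,\sigma_B,\,\sigma_U$ chosen in the excerpt are tautologically the groups $A_\sigma,\,B_\sigma,\,U_\sigma$. The compatibility condition $\sigma_U \in \sigma_A A \cap \sigma_B B$ imposed on the edge representatives is precisely what is needed to make the boundary maps $a_\sigma,\,b_\sigma$ honest inclusions, so that the canonical Bass-Serre presentation reduces to \eqref{fgr2} without extraneous conjugation factors.

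The main obstacle is not conceptual but notational: one must carefully match the various basepoint conventions and, in particular, verify that the $G_0$-action \eqref{at} on $\CC_n$ should be treated as a left action (the inversion $\sigma \mapsto \sigma^{-1}$ appearing in its definition is exactly what converts the natural right action of $\Aut\,\c\langle x,y\rangle$ on noncommutative polynomials to a left action on points). One also has to check that the relations $e^{-1} a_\sigma(g)\, e = b_\sigma(g)$ for edges $e \notin T$ come out in the clean form stated in \eqref{fgr2}; this is routine bookkeeping once the basepoint normalisation of the previous paragraph is in place.
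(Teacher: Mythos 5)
Your argument is correct, and it is precisely the ``standard Bass-Serre theory'' route that the paper mentions in the first line of its proof but deliberately does not follow. You work with the Bass-Serre tree $T_0$ of the amalgam \eqref{Aut} (vertices $G_0/A \sqcup G_0/B$, edges $G_0/U$), restrict the $G_0$-action to $G_n$, and invoke the structure theorem of \cite{Se} for the quotient graph of groups; the identification of $G_n\backslash T_0$ with $\Gamma_n$ then rests on the $G_0$-equivariant bijection $\CC_n \cong G_0/G_n$ coming from transitivity, which turns $H\backslash \CC_n$ into $G_n\backslash G_0/H$ for $H \in \{A,B,U\}$, and your observation that the normalisation $\sigma_U \in \sigma_A A \cap \sigma_B B$ forces $U_\sigma \subseteq A_\sigma$ and $U_\sigma \subseteq B_\sigma$, so the boundary monomorphisms are genuine inclusions and \eqref{fgr2} comes out in the stated form. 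The paper instead lifts the decomposition $G_0 = A \ast_U B$ through the covering of groupoids $p\colon \GG_n \to G_0$ attached to the transformation groupoid $\GG_n = \CC_n \rtimes G_0$, obtaining $\GG_n = \AG_n \ast_{\UG_n} \BG_n$ via \cite{O}, and then contracts the orbits of $\AG_n$, $\BG_n$, $\UG_n$ to vertices and edges, appealing to the groupoid Van Kampen/Bass-Serre theorem of \cite{HMM}. The two proofs are essentially dual: your version stays entirely within \cite{Se} and makes the double-coset combinatorics explicit, which is useful when one actually wants to compute the $\sigma$'s and the stabilizers (as in the examples for $n=2,3$); the paper's groupoid version avoids choosing coset representatives until the very last step and packages the bookkeeping into the covering-space formalism, which is why the authors call it more economic. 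The one point you flag as routine --- that the chosen lifts $\sigma_A, \sigma_B, \sigma_U$ must be compatible with a lift of the maximal tree $T$ to $T_0$ so that the tree edges contribute amalgamations rather than stable letters --- is indeed the only place where care is needed, and the paper is no more explicit about it than you are.
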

\begin{proof}
One can prove Theorem~\ref{T2} using the standard Bass-Serre theory (as exposed in \cite{Se}, Ch~I, Sect.~5,
or \cite{DD}, Ch.~I, Sect.~9). However, it seems that the more economic and intuitively clearer proof is
based on topological arguments: namely, an abstract version of Van Kampen's Theorem, which we are now
going to explain.

Let $\,\GG_n := \CC_n \rtimes G_0 \,$ denote the (discrete) transformation groupoid
corresponding to the action of $ G_0 $ on $ \CC_n $. The canonical projection $\,p:\, \GG_n \to G_0 \,$ is then
a connected covering of groupoids\footnote{We refer to \cite{M}, Ch.~3, for the theory of coverings of groupoids.},
which maps identically the vertex group of $ \GG_n $ at $\, (X_0,\,Y_0) \in \CC_n \,$ to the subgroup
$ G_n \subseteq G_0 $. Now, each of the subgroups $A$, $B$ and $U$ of $ G_0 $ can be lifted
to $ \GG_n\,$: $\,p^{-1}(A) = \GG_n \times_{G_0} A \,$,  $\,p^{-1}(B) = \GG_n \times_{G_0} B \,$
and $\,p^{-1}(U) = \GG_n \times_{G_0} U \,$, and these fibred products
are naturally isomorphic to the subgroupoids $\, \AG_n := \CC_n \rtimes A $, $\, \BG_n := \CC_n \rtimes B \,$
and $\, \UG_n := \CC_n \rtimes U \,$ of $ \GG_n$, respectively. Since the coproducts in the category of
groups coincide with coproducts in the category of groupoids and the latter can be lifted through coverings
(see \cite{O}, Lemma~3.1.1), the decomposition \eqref{Aut} implies
\begin{equation}
\la{vank}
\GG_n = \AG_n *_{\UG_n} \BG_n\ ,\quad \forall\,n \ge 0\ .
\end{equation}
Note that, unlike $ \GG_n$, the groupoids $ \AG_n $, $ \BG_n $ and $ \UG_n $ are not transitive
(if $\, n \ge 1 $), so \eqref{vank} can be viewed as an analogue of the Seifert-Van Kampen Theorem
for non-connected spaces (see, e.g., \cite{Ge}, Ch.~6, Appendix). As in the topological situation, computing
the fundamental (vertex) group from \eqref{vank} amounts to contracting the connected components (orbits) of
$ \AG_n $ and $ \BG_n $ to points (vertices) and $ \UG_n $ to edges. This defines a graph which is exactly
$ \Gamma_n $. Now, choosing basepoints in each of the contracted components and assigning the fundamental groups
at these basepoints to the corresponding vertices and edges defines a graph of groups (see \cite{HMM}, p.~46).
By {\it loc.~cit.}, Theorem~3, this graph of groups is (conjugate) isomorphic to the graph $ {\mathbf \Gamma}_n $
described above, and our group $ G_n $ is isomorphic to $\, \pi_1({\mathbf \Gamma}_n,\,T)\,$.
\end{proof}
Theorems~\ref{T1} and~\ref{T2} reduce the problem of describing the groups
$ \Aut \,D_n  $ to a purely geometric problem of describing the structure of the orbit spaces
of  $A$ and $B$ and $U$ on the Calogero-Moser varieties $ {\mathcal C}_n$. Using the earlier results
of \cite{W} and \cite{BW} and some basic invariant theory, one can obtain much
information about these orbits (and thence about the groups $ G_n $). In particular, the graphs
$ \Gamma_n $ can be completely described for small $ n $; it turns that $ \Gamma_n $ is a finite tree for
$\,n = 0,\,1,\, 2\,$, but has infinitely many cycles for $ n \ge 3 $ (see examples below).

We now explain the origin of $ \Gamma_n $. It turns out that these graphs can be
realized as quotient graphs of a certain `universal' tree $ \Gamma $ on which all
the groups $ \Aut \,D_n $ naturally act. Our construction of $ \Gamma $ is motivated
by algebraic geometry: specifically, a known application of the
Bass-Serre theory in the theory of surfaces (see, e.g., \cite{GD}, \cite{Wr}). In that approach,
the automorphism group of an affine surface $ S $ is described via its action on a tree whose
vertices correspond to certain (admissible) projective compactifications of $ S $. Following
the standard (by now) philosophy in noncommutative geometry (see, e.g., \cite{SV}), we may
think of our algebra $ D $ as the coordinate ring of a `noncommutative affine surface';
a `projective compactification' of $ D $ is then determined by a choice of filtration. Thus, we will define
$ \Gamma $ by taking as its vertices a certain class of filtrations on the algebra $D$.
It turns out that these filtrations can be naturally parametrized by an infinite-dimensional
{\it adelic Grassmannian} $\,\Gr $ introduced in \cite{W1} and studied in \cite{W, BW, BW3}
(in particular, we rely heavily on results of \cite{BW3}). Our contruction is close in spirit to
Serre's classic application of Bruhat-Tits trees for computing arithmetic subgroups of $ \SL_2(\K) $
over the function fields of smooth curves (see \cite{Se}, Chap.~II, \S\,2); however, at the moment,
we are not aware of any direct connection.

We begin by briefly recalling the definition of $\, \Gr $.  Let $ \c[z] $ be the polynomial ring in one
variable $z$. For each $\, \lambda \in \c \,$, we choose a {\it $\lambda$-primary} subspace in $ \c[z] $,
that is, a $\c$-linear subspace $\, V_{\lambda} \subseteq \c[z] \,$ containing a power of the maximal ideal
$\,\m_\lambda \,$ at $\, \lambda $. We suppose that $\, V_{\lambda} = \c[z] \, $
for all but finitely many $\, \lambda$'s.  Let $\, V = \bigcap_{\lambda} V_{\lambda} \,$
(such a subspace $\, V \,$ is called {\it primary decomposable} in  $ \c[z] $) and, finally, let
$$
W = \prod_{\lambda}\, (z - \lambda)^{-n_{\lambda}} \, V \subset
\c(z) \, ,
$$
where $\, n_{\lambda} \,$ is the codimension of $\, V_{\lambda} \,$
in $\, \c[z] \,$. By definition, $\, \Gr $ consists of all subspaces
$\, W \subset \c(z) \,$ obtained in this way.  For each
$\, W \in \Gr $ we set
\begin{equation*}
\label{aw}
A_W := \{f \in \c[z] \,:\, f W \subseteq W \} \,.
\end{equation*}
Taking $\,\Spec\,$ of $\, A_W \,$ gives then a rational curve $X$,
the inclusion $\, A_W \into \c[z] \,$ corresponds to normalization
$\, \pi : \A_{\c}^1 \to X \,$ (which is set-theoretically a bijective map),
and the $\, A_W$-module $\, W \,$ defines
a rank 1 torsion-free coherent sheaf $\, \L \,$ over $\, X \,$. In this way,
the points of $\, \Gr $ correspond bijectively to isomorphism classes of triples
$\, (\pi, X, \L) \,$ (see \cite{W1}).

Now, following \cite{BW}, for $\, W \in \Gr $ we define\footnote{In geometric terms, $ D(W) $
can be thought of as the ring $ D_{\L}(X) $ of twisted differential operators on $X$ with
coefficients in $ \L $.}
\begin{equation}
\label{dw}
D(W) := \{\D \in \c(z)[\partial_z] \,:\, \D W \subseteq W \} \, ,
\end{equation}
where $\, \c(z)[\partial_z] \,$ is the ring of rational differential operators
in the variable $z$. This last ring carries two natural filtrations:
the standard filtration, in which both generators $ z $ and $ \partial_z $
have degree $1$, and  the differential filtration, in which $\,\deg(z) = 0\,$
and $\,\deg(\partial_z) = 1\,$. These filtrations induce two different filtrations
on the algebra $ D(W) $, which we denote by $ \{D_{\bullet}^A(W)\} $ and $ \{D_{\bullet}^B(W)\} $
respectively.

Now, let $D$ be a fixed domain Morita equivalent to $ A_1 $. Following \cite{BW3}, we consider
the set\footnote{More generally, we may think of $ \Gr $ as a groupoid, in which the objects are
the $W$'s and the arrows are given by the algebra isomorphisms $ D(W) \to D(W') $. For $ D = D(W) $,
the set $ \Gr(D) $ is then a costar in $ \Gr  $, consisting of all arrows with target at $W$.
In \cite{BW3}, this set was denoted by $\,\Grad\,D\,$.} $\, \Gr (D) \,$
of all algebra isomorphisms $\,\sigma_W: D(W) \to D \,$, where $ W \in \Gr $
(more precisely, $\, \Gr (D) \,$ is the set of all pairs $\, (W, \,\sigma_W) \,$, where
$\, W \in \Gr $ and $\, \sigma_W \,$ is an isomorphism as above). Each
$\, \sigma_W \in \Gr (D) $ maps the two distinguished filtrations $ \{D_{\bullet}^A(W)\} $ and
$ \{D_{\bullet}^B(W)\} $ into the algebra $ D $: we call their images the {\it admissible}
filtrations on $D$ of type $A$ and type $B$, respectively. Let $ \PP_{A}(D) $ and $ \PP_{B}(D) $ denote
the sets of all such filtrations coming from various $\,\sigma_W \in \Gr (D)\,$.
By definition, we have then two natural projections
\begin{equation}
\la{prc}
\PP_{A}(D)  \stackrel{\pi_A}{\longleftarrow} \Gr (D) \stackrel{\pi_B}{\longrightarrow} \PP_B(D)\ .
\end{equation}
We say that $\,(W, \sigma_W) \,$ and $\,(W', \sigma_W') \,$ are {\it equivalent} in $\, \Gr (D) \,$
if their images under $ \pi_A $ and $ \pi_B $ coincide. Writing $ \Gr (D)/\!\sim $ for the set of
equivalence classes under this relation, we define an oriented graph $ \Gamma $ by
$$
\mbox{\tt Vert}(\Gamma) := \PP_A(D) \,\bigsqcup\, \PP_B(D)\ ,  \quad \mbox{\tt Edge}(\Gamma) := \Gr (D)/\!\sim\ ,
$$
with incidence maps $\,\mbox{\tt Edge}(\Gamma) \to \mbox{\tt Vert}(\Gamma)\,$ induced by the
projections \eqref{prc}. Observe that the group $ \Aut \,D $ acts naturally on the set $\, \Gr (D)\,$
(by composition), and this action induces an action of $ \Aut \,D $ on the graph $ \Gamma $ via \eqref{prc}.
We write $\, \Aut \, D \backslash \Gamma\,$ for the corresponding quotient graph.

\begin{theorem}
\la{T3}

$(a)$ $ \Gamma $ is a tree, which is independent of $ D $ (up to isomorphism).

$(b)$ For each $ n \ge 0 $, the graph $ \Aut \, D_n \backslash \Gamma $ is naturally isomorphic to $ \Gamma_n $.

\end{theorem}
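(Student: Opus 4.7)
The plan is to identify $\Gamma$ with the Bass-Serre tree of the amalgamated product $\,G_0 = A *_U B\,$. Once this is done, part~(a) is immediate from the classical theorem that the Bass-Serre tree of an amalgam is a tree, and part~(b) reduces to a straightforward double-coset calculation using Theorem~\ref{T1} and the transitivity of $G_0$ on $\CC_n$. The main technical burden lies in identifying the vertex and edge sets of $\Gamma$ with coset spaces of $A$, $B$, $U$ in $G_0$.

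The first step is to construct a canonical bijection $\,\Gr(D) \cong G_0\,$. Fix a basepoint $\,(W_0,\sigma_{W_0}) \in \Gr(D)\,$ with $D = D(W_0)$ and $\sigma_{W_0} = \id$. The group $G_0$ acts on $\Gr$ via the Calogero-Moser correspondence $\,\Gr \cong \bigsqcup_n \CC_n\,$ of \cite{BW}, and this action lifts canonically to $\Gr(D)$: for $g \in G_0$ we set $\,g \cdot (W_0,\id) := (g \cdot W_0,\,\alpha_g^{-1})\,$, where $\,\alpha_g:\, D(W_0) \stackrel{\sim}{\to} D(g \cdot W_0)\,$ is the algebra isomorphism induced by $g$. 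The resulting left action of $G_0$ on $\Gr(D)$ is free---any stabilizer $g$ must satisfy $g \in G_n := \Stab_{G_0}(W_0)$ and induce the identity on $D$, hence $g = 1$ by Theorem~\ref{T1}---and transitive, since $G_0$ acts transitively on $\CC_n$ while $\,\Aut(D) \cong G_n\,$ acts transitively (by composition) on each fiber of $\,\Gr(D) \to \Gr\,$. Thus $\,\Gr(D) \cong G_0\,$ as left $G_0$-sets, and the right action of $\Aut(D)$ on $\Gr(D)$ by $\,\sigma_W \mapsto \phi \circ \sigma_W\,$ corresponds to the right translation action of $G_n \subseteq G_0$ on itself.

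The second step is to read off the graph structure from this bijection. Recall from the classical work of Dixmier~\cite{D} and Makar-Limanov~\cite{ML1} that $A$ and $B$ are precisely the stabilizers in $G_0$ of, respectively, the standard and the differential filtrations on $A_1$. Using the definition~\eqref{dw} of $D(W)$ and the compatibility of the $G_0$-action with these filtrations on $\,\c(z)[\partial_z]\,$ (the relevant technical input being developed in \cite{BW3}), one verifies that two elements of $\Gr(D)$ give the same admissible type $A$ filtration on $D$ if and only if their images in $G_0$ lie in the same right coset of $A$; an analogous statement holds for $B$ and (since $A \cap B = U$) for the edge equivalence. Consequently,
\begin{equation*}
\PP_A(D) \,\cong\, A \backslash G_0\,, \qquad \PP_B(D) \,\cong\, B \backslash G_0\,, \qquad \Gr(D)/\!\sim\,\,\cong\, U \backslash G_0\,,
\end{equation*}
with incidence maps given by the canonical projections of coset spaces. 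This exhibits $\Gamma$ as the Bass-Serre tree of $\,G_0 = A *_U B\,$, so $\Gamma$ is a tree; and since the description involves only the abstract group $G_0$ with its distinguished subgroups, $\Gamma$ is manifestly independent of the choice of $D$, proving~(a).

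For part~(b), note that by the first step the action of $\,\Aut(D_n) \cong G_n\,$ on $\Gamma$ is, under the above bijections, right translation by $G_n$. Hence the quotient graph has vertex set $\,A \backslash G_0/G_n \,\sqcup\, B \backslash G_0/G_n\,$ and edge set $\,U \backslash G_0/G_n\,$. The orbit map at the basepoint $\,(X_0, Y_0) \in \CC_n\,$ yields a $G_0$-equivariant identification $\,G_0/G_n \stackrel{\sim}{\to} \CC_n\,$, under which these double-coset spaces become respectively $\,A \backslash \CC_n\,$, $\,B \backslash \CC_n\,$ and $\,U \backslash \CC_n\,$. Comparing with~\eqref{gamman} we obtain the required isomorphism $\,\Aut\,D_n \backslash \Gamma \cong \Gamma_n\,$, completing the proof of~(b). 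The principal difficulty throughout is the coset-theoretic description of $\PP_A(D)$ and $\PP_B(D)$ in the second step, which requires a careful tracking of how the standard and differential filtrations on $\,\c(z)[\partial_z]\,$ restrict to $D(W)$ and transform under the $G_0$-action.
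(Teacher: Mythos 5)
Your proposal is correct in outline and is essentially the argument the paper intends: the paper offers no formal proof of Theorem~\ref{T3}, only the identification $\PP_B(D_n)\cong \mbox{\rm Mad}(D_n)$ together with an appeal to the techniques of \cite{BW3}, and realizing $\Gamma$ as the Bass-Serre tree of $A*_U B$ via the free transitive $G_0$-action on $\Gr (D)$ is the natural way to complete that sketch (it also accounts for the paper's remark that for $D=A_1$ the tree coincides with Alev's \cite{A}). The one point to stress is that the crux of your second step --- that $(W,\sigma_W)$ and $(W',\sigma_{W'})$ determine the same type-$A$ (resp.\ type-$B$) admissible filtration exactly when the corresponding group elements lie in a single right coset of $A$ (resp.\ $B$), which requires both that the induced isomorphisms $D(W)\to D(aW)$, $a\in A$, carry the standard filtration to the standard filtration \emph{and}, conversely, that every filtration-preserving isomorphism $D(W)\to D(W')$ is induced by an element of $A$ --- is asserted rather than proved; since this is precisely the step the paper itself delegates to \cite{BW3}, your write-up matches the source's level of rigour, but in a self-contained account this lemma is the piece that would need a genuine proof.
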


Theorem~\ref{T3} can be viewed as a generalization of the main results of \cite{BW3}.
Indeed, this last paper is concerned with a description of the maximal abelian ad-nilpotent ({\it mad})
subalgebras of $ D_n \,$: its main theorems (see {\it loc. cit.}, Theorem~1.5 and Theorem~1.6) say
that the space $ \mbox{\rm Mad}(D_n) $ of all mad subalgebras of $ D_n $ is independent of $ D_n $
and its quotient modulo the natural action of $ \Aut \,D_n $ is isomorphic to the orbit space
$ B \backslash \mathcal{C}_n $. Now, it is easy to
see that every mad sublagebra defines an admissible filtration on $ D_n $ of type $B$, and conversely
the zero degree component of every filtration of type $B$ is a mad subalgebra of $D_n$. Thus, we have
a natural bijection $\, \PP_B(D_n) \cong \mbox{\rm Mad}(D_n) $, which is equivariant under the action
of $ \Aut \,D_n $. This implies that $ \PP_B(D_n) $ does not depend on $ D_n $, which is part of
Theorem~\ref{T3}$(a)$, and
$$
\Aut \, D_n \backslash \PP_B(D_n)\,\cong \,\Aut \, D_n \backslash \mbox{\rm Mad}(D_n) \,\cong\,
B \backslash \mathcal{C}_n\ ,
$$
which is part of Theorem~\ref{T3}$(b)$. In fact, the entire Theorem~\ref{T3} can be proved using the techniques
of \cite{BW3}. We should also mention that for $ D = A_1(k) $ our construction of the tree $ \Gamma $
agrees with the one given in \cite{A}.

\vspace{1ex}

We now look at examples of the graphs $ \Gamma_n $ and groups $G_n$ for small $n$.
For $ n = 0 $, the space $ \CC_0 $ is just a point, and so are a fortiori
its orbit spaces. The graph $ \Gamma_0 $ is thus a segment, and the corresponding graph of
groups $ {\mathbf \Gamma}_0 $ is given by $\,[\,A \stackrel{U}{\longrightarrow} B\,] \,$.
Formula \eqref{fgr2} then says that $\,G_0 = A \ast_U B \,$, which agrees, of course, with
the Makar-Limanov isomorphism \eqref{Aut}.

For $ n =1 $, we have $\, {\mathcal C}_1 \cong \mathbb{C}^2 $, with $ (X_0, Y_0) $ corresponding
to the origin. Since each of the groups
$A$, $B$ and $U$ contains translations $\,(x,y) \mapsto (x+a, y+b) \,$, $\,a,b \in \c $,
they act transitively on $ {\mathcal C}_1 $. So again $ \Gamma_1 $ is just the segment, and
$ {\mathbf \Gamma}_1 $ is given by $\,[\,A_1 \stackrel{U_1}{\longrightarrow} B_1\,]\,$,
where $\, A_1 := G_1 \cap A \,$, $\, B_1 := G_1 \cap B \,$ and $\, U_1 := G_1 \cap U \,$.
Since, by definition, $ G_1 $ consists of all $\, \sigma \in G_0 \,$
preserving $\,(0, 0)\,$, the groups $A_1$, $ B_1 $ and $U_1 $ are obvious:
\begin{eqnarray}
A_1\!\!\!\!\! &:& (x,\,y) \mapsto (ax + by,\,cx+dy)\ , \quad a,\,b,\,c,\,d \in
\mathbb{C} \ ,\ ad-bc=1\ ,
\nonumber \\
B_1\!\!\!\!\! &:& (x,\,y) \mapsto (ax + q(y),\,a^{-1} y)\ , \quad a \in \mathbb{C}^*\ ,\
q \in \c[y]\ ,  \ q(0) = 0\ , \nonumber \\
U_1 \!\!\!\!\! &:& (x,\,y) \mapsto (ax + by,\,a^{-1} y)\ , \quad a \in \mathbb{C}^*\ ,\
b\in \mathbb{C}\ .
\nonumber
\end{eqnarray}
It follows from \eqref{fgr2} that $\, G_1 = A_1 \ast_{U_1} B_1 \,$.

For $ n = 2 $, the situation is already more interesting. A simple calculation shows that
$ U $ has three orbits in $ {\mathcal C}_2$: two closed orbits of dimension $3$ and one open orbit
of dimension $4$. Moreover, the $B$-orbits coincide with the $U$-orbits. Combinatorially, this means
that the group $A$ acts transitively, and the graph $ \Gamma_2 $ is a tree with one nonterminal and
three terminal vertices corresponding to the $A$-orbit and the $B$-orbits, respectively.
In this case, the graph of groups ${\mathbf \Gamma_2} $ is given by
$$
\begin{diagram}
&                        &                       &  G_{2,y} \rtimes \c^*    \\
&                        & \ruTo^{\c^*}     &               \\
&\c^*                   & \rTo^{\Z_{2}\quad} & G^{(1)}_{2, y} \rtimes \Z_{2} \\
&                         & \rdTo_{\c^*}     &               \\
&                         &                       & G_{2,x} \rtimes \c^*    \\
\end{diagram}
$$
where $ G_{2,x} $ and $ G_{2,y} $ are the subgroups of $ G_0 $ consisting of all transformations
$ \Phi_p $ and $ \Psi_q $ (see \eqref{Phs}),
with $ p \in \c[x] $ and $ q \in \c[y] $ satisfying $ p(0) = p'(0) = 0 $ and
$ q(0) = q'(0)= 0 $ respectively, and $\, G^{(1)}_{2, y} := \{\,\Phi_{-x}\,\Psi_q \,\Phi_{ x} \in G_0 \ :\ q \in \c[y]\ , \
q(\pm 1) = 0\,\}\,$. Formula \eqref{fgr2} yields the presentation
$$
\,G_2 = (G_{2,x} \rtimes \c^*) \ast_{\c^*}
(G_{2,y} \rtimes \c^*) \ast_{\Z_2} (G^{(1)}_{2, y}
\rtimes \Z_2) \, .
$$
In particular, $ G_2 $ is generated by its subgroups $\, G_{2,x} $, $\, G_{2,y} $, $\, G^{(1)}_{2, y}$
and $ \c^* $.

Now, for $ n = 3 $, the structure of the graph $ \Gamma_3 $ and the group $ G_3 $ is much more complicated.
The graph $ \Gamma_3 $ is  {\it not}
a tree: in fact, it has infinitely many circuits. Nevertheless, the group $ G_3 $ can still be described
explicitly:
$$
G_3 = \frac{\pi_1(T_3,\,G_3) \,\star\,
\langle E_+(\Gamma_3 \setminus
T_3)\rangle}{(\,e^{-1} \alpha_e(g)\, e = \alpha_{e^*}(g)\, :\, \forall\,e \in
E_+(\Gamma_3 \setminus T_3),\, \forall\, g \in G_e\,)} \ ,
$$
where $ T_3 $ is a (maximal) tree in $ \Gamma_3 $ given in Figure \ref{T3}, $\,\pi_1(T_3,\,G_3)\,$ is the
corresponding tree product of stabilizer groups, and the complement graph $\,\Gamma_3 \setminus T_3 \,$
is shown in Figure \ref{graph4}.

We would like to end this paper with some questions and conjectures.

\medskip

{1.} By \cite{ML1}, it is known that $ G_0 $ is isomorphic to the group $\,\mbox{\rm SAut}\,\A_{\c}^2 \,$ of
{\it symplectic} automorphisms of the affine plane $\,\A_{\c}^2\,$ (as in the case of the Weyl algebra,
the isomorphism $\,G_0 \cong \mbox{\rm SAut}\, \A_{\c}^2 \,$ is induced by the canonical projection
$\, \c\langle x,\,y \rangle \to \c[x,\,y] $). Thus, the groups $ G_n $ can be naturally identified
with subgroups of $\,\Aut\,\A_{\c}^2 \,$. Do these last subgroups have a geometric interpretation?

{2.}
In this paper, we have described the structure of $ G_n $ and $ \Aut \,D_n $ as discrete groups.
However, these two groups carry natural {\it algebraic} structures and can be viewed as
infinite-dimensional algebraic groups (in the sense of Shafarevich \cite{Sh}). Despite being isomorphic
to each other as discrete groups, they are not isomorphic as algebraic groups
(for $n=0$, this phenomenon was observed in \cite{BW}.) A natural question is to explicitly describe
the algebraic structures on $ G_n $ and $ \Aut \,D_n $; in particular, to compute the corresponding
(infinite-dimensional) Lie algebras. The last question was an original motivation for our work. For
$ G_0 $, the answer is known (see \cite{G}).

{3.}
Compute the homology of the groups $ G_n $ for all $n$. Again, for $ n = 0 \,$, the answer is known
(see \cite{Al}): $\,H_*(G_0,\,\Z) \cong H_*(\SL_2(\c),\,\Z)\,$. One may wonder whether the groups
$ H_*(G_n,\,\Z) $ are strong enough invariants to distinguish the algebras $ D_n $
up to isomorphism. Unfortunately, the answer is `no': in fact, it follows from our description of $ G_1 $ that
$\,H_*(G_1,\,\Z) \cong H_*(\SL_2(\c),\,\Z)\,$. However, for $ n \ge 2 $, it seems that the groups $ H_*(G_n,\,\Z) $
are neither isomorphic to $\,H_*(\SL_2(\c),\,\Z)\,$ nor to each other, so they may provide
interesting invariants.

{4.}
Finally, we would like to propose an extension of the well-known {\it Dixmier Conjecture}
for $ A_1 \,$ (see \cite{D}, Probl\`eme~11.1) to the class of Morita equivalent algebras.
We recall that if $ D $ is a domain Morita equivalent to $ A_1 $, then there is a unique
integer $ n \ge 0 $ such that $\,D \cong D_n \,$, where $\,D_n \,$ is the endomorphism ring of
the right ideal $\, M_n = x^{n} A_1 + (y+n x^{-1})\,A_1 $. For two unital $\c$-algebras $A$ and $B$,
we denote by $\,\Hom\, (A,\,B)\,$ the set of all unital $\c$-algebra homomorphisms $\,A \to B \,$.
\begin{conjecture}
\la{DC}
For all $\, n, m \ge 0 \,$, we have
\begin{equation*}
\Hom\, (D_n,\,D_m) = \left\{
\begin{array}{lll}
\emptyset & \mbox{if}\quad n \ne m \\*[1ex]
\Aut \,D_n & \mbox{if} \quad n = m
\end{array}
\right.
\end{equation*}
\end{conjecture}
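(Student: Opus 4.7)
The natural goal is to reduce Conjecture~\ref{DC} to the classical Dixmier Conjecture, which it recovers at $n=m=0$. Since each $D_n$ is simple --- being Morita equivalent to the simple ring $A_1$ --- every $\phi \in \Hom(D_n, D_m)$ is automatically injective, so it suffices to rule out injections when $n \neq m$ and to show that every injective endomorphism of $D_n$ is already surjective.

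Given $\phi: D_n \to D_m$, endow $M_m$ with a $(D_n, A_1)$-bimodule structure by restricting the left $D_m$-action along $\phi$, and apply the Morita equivalence $\Hom_{D_n}(M_n, -): D_n\text{-Mod} \xrightarrow{\sim} A_1\text{-Mod}$ to define $N_\phi := \Hom_{D_n}(M_n, M_m)$. This $N_\phi$ is naturally an $A_1$-bimodule: the right action comes from $M_m$, and the left action from the right $A_1$-structure on $M_n$. Because $\phi$ is injective and both rings share the Goldie quotient $Q = \Frac(A_1)$, $N_\phi$ is torsion-free and generically of rank one on each side. The next step is to classify such bimodules: one expects $N_\phi$ to take the shape ${}_\sigma(A_1) \otimes_{A_1} P$ for some endomorphism $\sigma \in \End A_1$ and some rank-one projective right $A_1$-module $P$, extending Stafford's theorem $\omega_{A_1}: \Aut A_1 \xrightarrow{\sim} \Pic A_1$ from invertible bimodules to ``endomorphism-twists''. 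Granting the classical Dixmier Conjecture, $\sigma$ would then be an automorphism, and tracking the Calogero-Moser invariants of $N_\phi$ and $P$ through the correspondence of \cite{BW} would force $n = m$ and identify $\sigma$ with an element in the image of $i_n: \Aut D_n \hookrightarrow \Aut A_1$ from diagram \eqref{D1}, yielding $\phi \in \Aut D_n$.

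The principal obstacle is, of course, the classical Dixmier Conjecture itself, which remains open: any full proof of Conjecture~\ref{DC} must either settle it or proceed conditionally on it. A secondary but still substantial difficulty is the bimodule classification step above: Stafford's theorem handles the invertible case, but its extension to bimodules coming from merely injective homomorphisms is not automatic and seems to require a delicate analysis of how Calogero-Moser data transforms under non-invertible Morita-type constructions. A plausible route is to exploit the universal tree $\Gamma$ of Theorem~\ref{T3}, on which $\Aut D$ naturally acts, to reduce this classification to a combinatorial statement on the quotient graph, where the acyclicity of $\Gamma$ could plausibly be leveraged to rule out the non-isomorphism case $n \neq m$.
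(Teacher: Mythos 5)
There is a fundamental mismatch here: the statement you are trying to prove is stated in the paper as an open \emph{conjecture}, not a theorem, and the paper supplies no proof of it. Indeed, the authors explicitly note that Conjecture~\ref{DC} strengthens the classical Dixmier Conjecture (which is the case $n=m=0$) and they pose as an open question even the weaker implication ``Dixmier Conjecture $\Rightarrow$ Conjecture~\ref{DC}''. So there is no argument in the paper to compare yours against, and any complete proof you produced would be a major new result rather than a reconstruction.

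Your sketch is a sensible reduction strategy, and to your credit you flag its two gaps honestly; but both gaps are fatal to it being a proof. First, the argument is openly conditional on the classical Dixmier Conjecture, which remains unproved. Second, and independently, the bimodule classification you need --- that every $(A_1,A_1)$-bimodule of the form $N_\phi$ arising from an injective homomorphism $\phi: D_n \to D_m$ decomposes as an endomorphism-twist ${}_\sigma(A_1)\otimes_{A_1} P$ with $P$ rank-one projective --- is not a routine extension of Stafford's theorem. Stafford's result in \cite{St} concerns \emph{invertible} bimodules (equivalently, the isomorphism $\Aut A_1 \stackrel{\sim}{\to} \Pic A_1$), and the Calogero--Moser correspondence of \cite{BW} classifies one-sided objects (right ideals of $A_1$), not two-sided bimodules; there is no mechanism in either source for ``tracking Calogero--Moser invariants'' through a non-invertible bimodule. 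In fact, classifying such rank-one torsion-free bimodules over $A_1$ is essentially equivalent in difficulty to the Dixmier Conjecture itself, so this step risks circularity rather than reduction. The correct assessment of your proposal is that it is a plausible research program, not a proof, and the paper treats the statement accordingly.
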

\noindent
Formally, Conjecture~\ref{DC} is a strengthening of the Dixmier Conjecture for $ A_1 $: in fact,
in our notation, the latter says that $\,\Hom\, (D_0,\,D_0) = \Aut \,D_0\,$. Does actually the Dixmier
Conjecture imply Conjecture~\ref{DC}?

\subsection*{Acknowledgments}
We are grateful to J.~Alev, V.~Bavula, O.~Chalykh, K.~Vogtmann, D.~Wright, G.~Wilson and E.~Zelmanov for interesting discussions,
questions and comments. We would also like to thank D.~Wright for providing us with reference \cite{Wr},
which turned out to be very useful, and G.~Wilson for sending us a copy of Quillen's private notes on trees and amalgams.
This work was partially supported by NSF grant DMS 09-01570.

\bibliographystyle{amsalpha}

%
%
\newpage
\begin{figure}[htp]\begin{center}
\hspace*{-1in}
\includegraphics[trim = 10mm 80mm 15mm 45mm, clip, width=15cm]{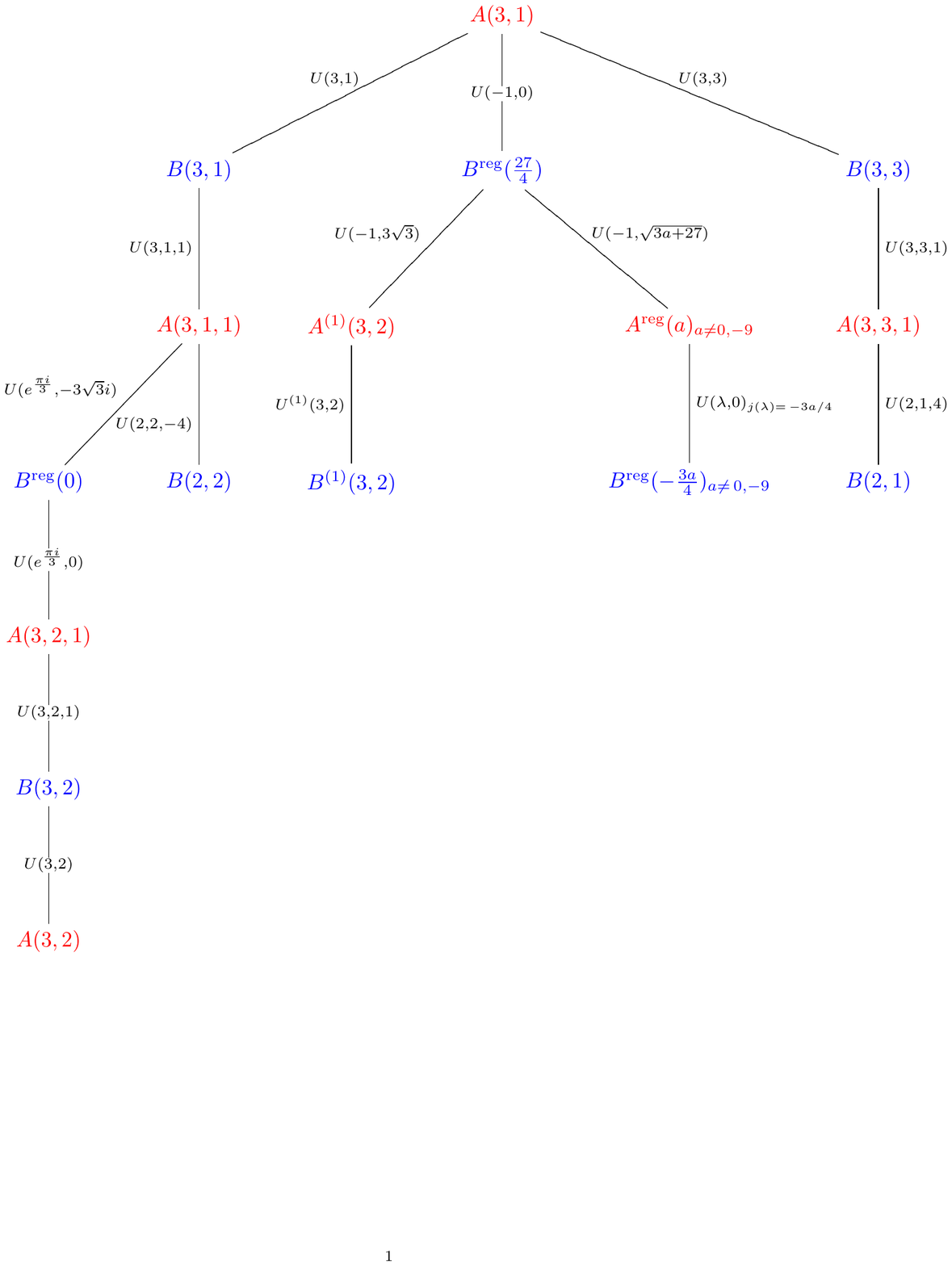}
\end{center}
\caption[Fig]{Maximal Tree $T_3$ (above); Graph $\Gamma_3 \backslash T_3$ (below)}
\label{graph4}
\begin{center}
\hspace*{-1in}
\includegraphics[trim = 10mm 100mm 20mm 45mm, clip, width=15cm]{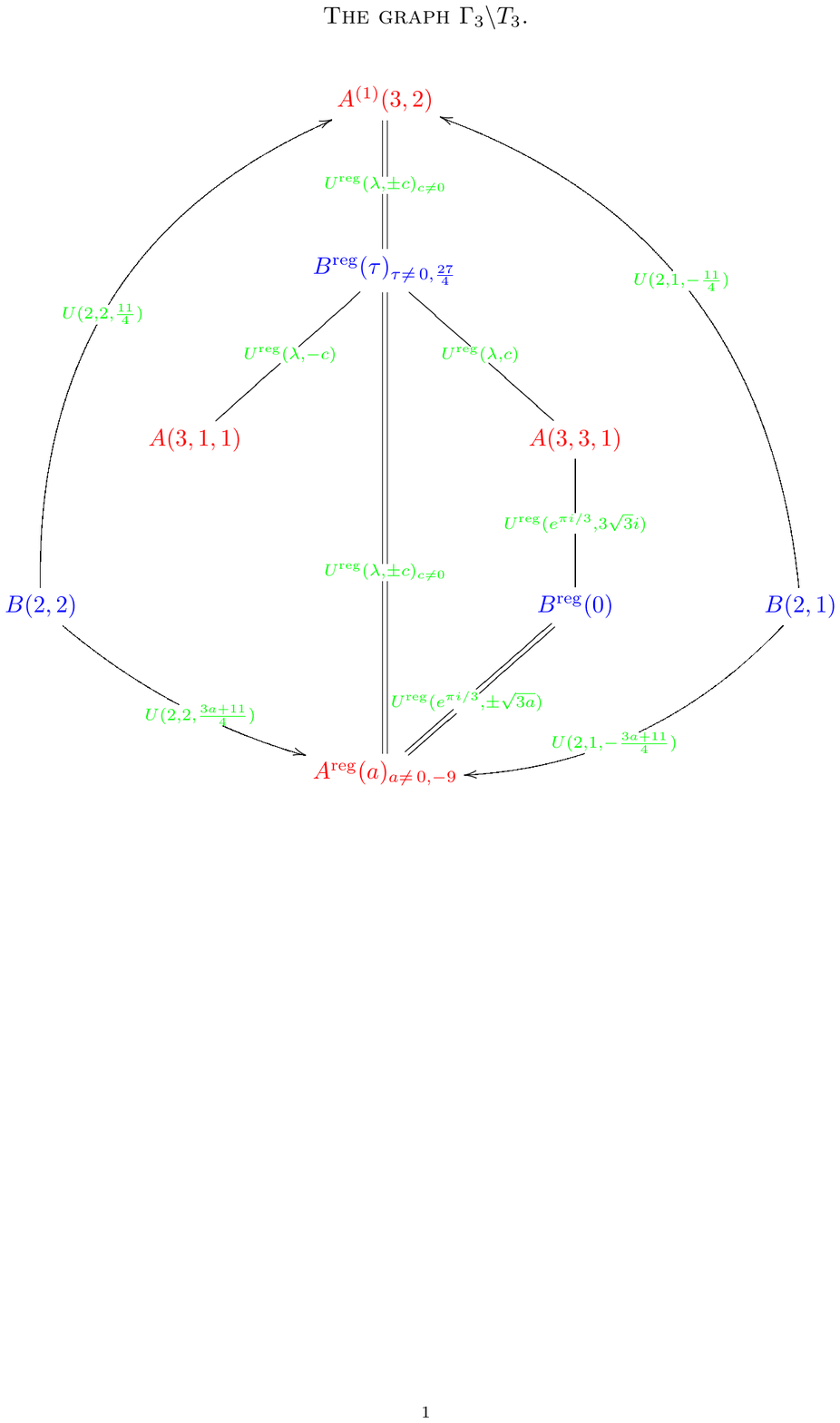}
\end{center}
\end{figure}
\newpage
\begin{figure}[htp]\begin{center}
\hspace*{-1in}
\includegraphics[trim = 10mm 10mm 20mm 20mm, clip, width=15cm]{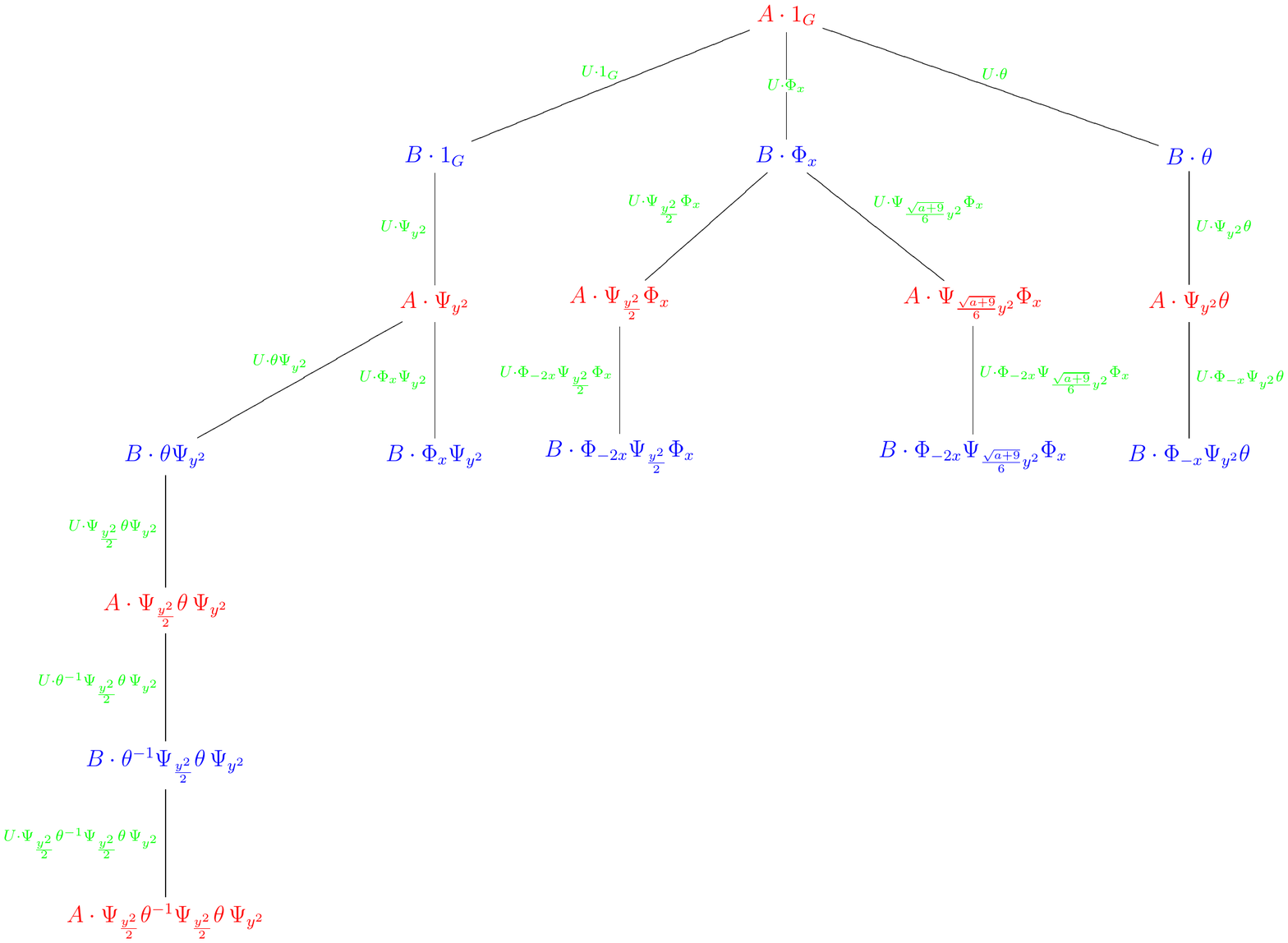}
\end{center}
\caption[Fig]{Maximal Tree of Groups $T_3$}
\label{T3}
%
 \begin{center}
\hspace*{-1in}
\includegraphics[trim = 10mm 10mm 15mm 40mm, clip, width=15cm]{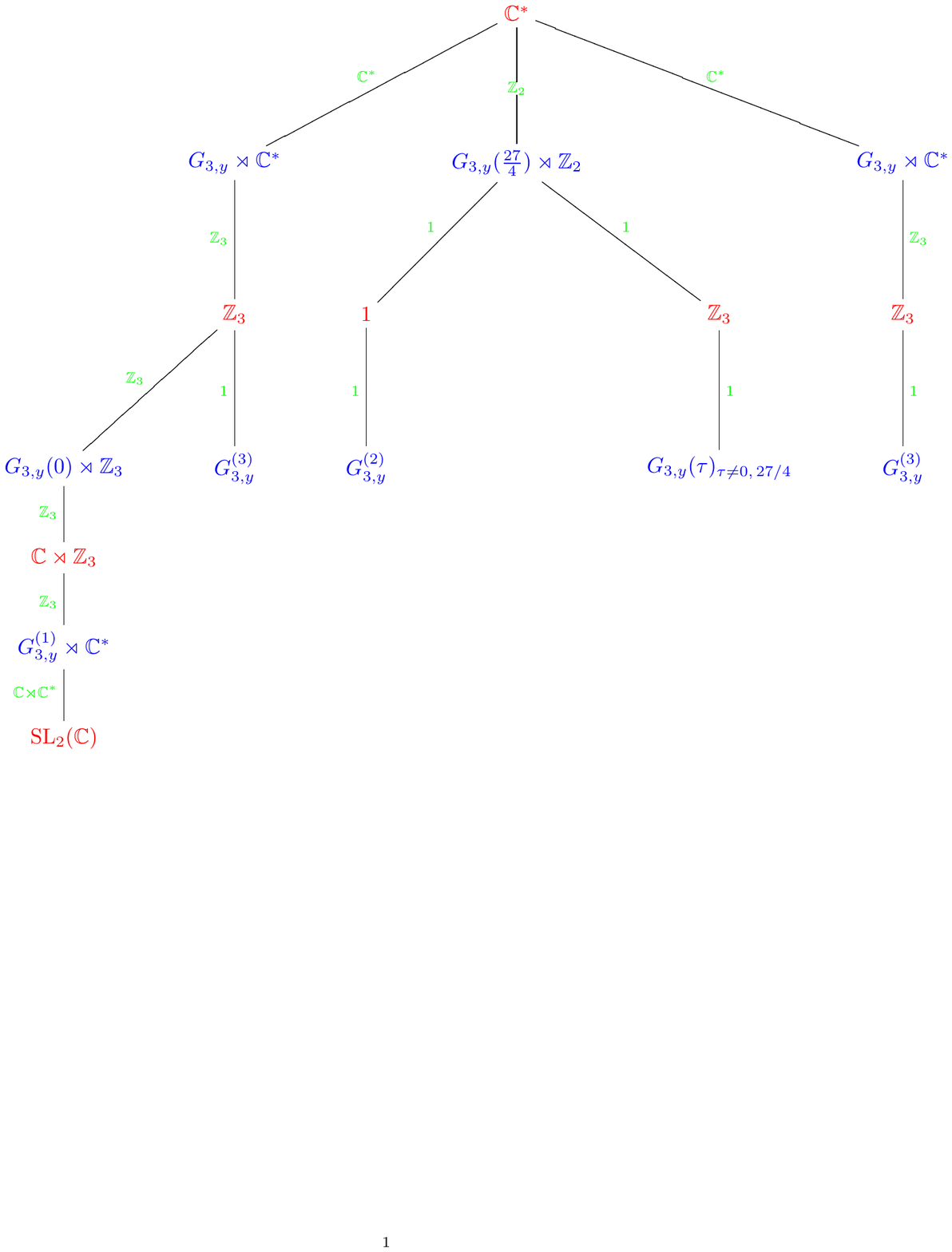}
\end{center}
\caption[Fig]{Graph ththt}
\end{figure}

\end{document}